\newcommand{\RR}{\mathbb{R}}
\newtheorem{theorem}{Theorem}[section]
\newtheorem{lemma}[theorem]{Lemma}
\theoremstyle{remark}
\DeclareMathOperator{\diam}{\mathrm{diam}}
\renewenvironment{proof}{
  \noindent{\it Proof.}\ }{\hspace*{\fill}
  \begin{math}\Box\end{math}\medskip}
\title{A remark on an overdetermined problem in Riemannian Geometry}
\author{Giulio Ciraolo, Luigi Vezzoni}
\date{\today}
\address{Dipartimento di Matematica e Informatica, Universit\`a di Palermo, Via Archirafi 34, 90123, Italy} \email{giulio.ciraolo@unipa.it}
\address{Dipartimento di Matematica G. Peano, Universit\`a di Torino, Via Carlo Alberto 10, 10123 Torino, Italy.} \email{luigi.vezzoni@unito.it}
\thanks{This work was partially supported by the project PRIN \lq\lq {\em Variet\`a reali e complesse: geometria, topologia e analisi armonica}\rq\rq ,  the projects FIRB \lq\lq {\em Differential Geometry and Geometric functions theory}\rq\rq and \lq\lq {\em Geometrical and Qualitative aspects of PDE}\rq\rq, and GNSAGA and GNAMPA (INdAM) of Italy.\\
}
\keywords{Overdetermined PDE, Comparison principle, Hopf Lemma, Riemannian Geometry, Rotationally symmetric spaces, Isoparametric functions.}
    \subjclass{Primary 35R01, 35B05; Secondary: 35J92, 53C20.} 
\begin{document}

\maketitle

\begin{abstract}
Let $(M,g)$ be a Riemannian manifold with a distinguished point $O$ and assume that the geodesic distance $d$ from  $O$ is an isoparametric function. Let $\Omega\subset M$ be a bounded domain, with $O \in \Omega$, and consider the problem $\Delta_p u = -1$ in $\Omega$ with $u=0$ on $\partial \Omega$, where $\Delta_p$ is the $p$-Laplacian of $g$. We prove that if the normal derivative  $\partial_{\nu}u$ of $u$ along the boundary of $\Omega$ is a function of $d$ satisfying suitable conditions, then $\Omega$ must be a geodesic ball. In particular, our result applies to open balls of $\RR^n$ equipped with a rotationally symmetric metric of the form $g=dt^2+\rho^2(t)\,g_S$, where $g_S$ is the standard metric of the sphere. 
\end{abstract}

\section{Introduction}
In this note we consider an overdetermined problem in Riemannian Geometry. 
An overdetermined problem usually consists in a partial differential equation with \lq\lq too many\rq\rq prescribed boundary conditions.  Typically, these kinds of problems are not well-posed and the existence of a solution imposes strong restrictions on the shape of the domain where the problem is defined. Consequently, the research in overdetermined problems usually consists in classifying all the possible domains where the problem is well-posed.
%the domain of the problem can be considered as the real unknown of the problem, which can be interpreted as a shape optimization form problem, a free-boundary problem or an inverse problem.
A central result in this context was obtained by Serrin in his seminal paper \cite{Se}. The today known {\em Serrin's overdetermined problem} consists in the torsion problem 
\begin{equation} \label{pb_serrin1}
\begin{cases}
\Delta u = -1 & \textmd{in } \Omega \,, \\
u= 0 & \textmd{on } \partial \Omega \,, \
\end{cases}
\end{equation}
where $\Omega$ is a bounded domain in $\RR^n$,
together with a constant Neumann condition at the boundary $\partial \Omega$:
\begin{equation} \label{pb_serrin2}
u_\nu = const \quad \textmd{on } \partial \Omega \,.
\end{equation}
In \cite{Se} Serrin proved that \eqref{pb_serrin1}-\eqref{pb_serrin2} admits a solution if and only if $\Omega$ is a ball whose 
radius, in view of the divergence theorem, is determined by the constant in condition \eqref{pb_serrin2}; moreover the solution $u$ is radially symmetric. Other proofs and generalizations of Serrin's theorem can be found for instance in \cite{BNST,CH,We}. 
%The divergence theorem easily implies  that the radius of the ball is uniquely determined by value of the normal derivative at the boundary. It is also clear that the ball is not uniquely determined, since the problem is translationally invariant.

A related problem was considered by Greco in \cite{Gr}, where it is investigated the following question:

\medskip
{\em Given a point $O \in \RR^n$, $n\geq 2$, which overdetermined conditions force $\Omega$ to be a ball about $O$?}

\medskip 
\noindent The main result in \cite{Gr}, in its simplest form, is the following: 

\smallskip  
{\em Let $\Omega$ be a bounded domain in $\RR^n$ containing the fixed point $O$ and assume that there exists a solution $u$ of \eqref{pb_serrin1} in $\Omega$ satisfying $\partial_\nu u = c|x|$ at every $x \in \partial \Omega$ for some constant $c$. Then $\Omega$ is a ball centered at $O$.}

\smallskip 
The assumption $O\in \Omega$ in the statement above cannot be dropped in general, as pointed out in some examples in \cite{Gr}. For some related results we refer to \cite{BS,Gr2}. 

The problem posed by Greco in \cite{Gr} still makes sense in the Riemannian setting, where $\RR^n$ is replaced by a smooth $n$-dimensional manifold $M$, the Euclidean metric by a Riemannian metric $g$  on $M$ and Euclidean balls by geodesic balls about a fixed point $O$. 
In the present paper, we generalize some of the results in \cite{Gr} to the Riemannian setting by assuming that the distance function from the fixed point $O$ is isoparametric, i.e. it is of class $C^2$ in $M\setminus \{O\}$ and there exists a continuous function $\eta$ such that $\Delta d = \eta (d)$ in $M\setminus \{O\}$ (see \cite{Wang}). A related result can be found in \cite{Enciso_Peralta}. 

We shall use the following notation: $B_r$ denotes the geodesic ball of radius $r$ centered at $O$; $|B_r|$ and $|\partial B_r|$ are the volume and the perimeter of $B_r$, respectively,
\begin{equation} \label{Phi_def}
\Phi(r) := \left(\frac{|B_r|}{|\partial B_r|}\right)^{\frac{1}{p-1}} \,,
\end{equation}
and $\Delta_p$ is the $p$-Laplacian operator. 
Notice that $\Phi(r)$ is exactly the value of the interior normal derivative at the boundary of the solution of the problem
$$
\begin{cases}
\Delta_p v = -1 & \textmd{in } B_r \,,\\
v= 0 & \textmd{on } \partial B_r \,,
\end{cases}
$$
which is constant if one assumes that the distance function is isoparametric (see Lemma \ref{lemma_ball} below).

Our main result is the following. 

\begin{theorem} \label{thm_riemann}
Let $(M,g)$ be a Riemannian manifold and assume that the distance function $d$ from a fixed point $O$ is isoparametric. Let $\Omega \subset M$ be a bounded domain with $O \in \Omega$. Assume that there exists a solution $u$ to
\begin{equation} \label{pb_general}
\begin{cases}
\Delta_p u = -1 & \textmd{in } \Omega \,,\\
u= 0 & \textmd{on } \partial \Omega \,,\\
\partial_\nu u=f\circ d & \textmd{on } \partial \Omega\,,
\end{cases}
\end{equation}
with $p>1$, where $f$ satisfies one of the following two conditions:
\begin{itemize}
\item[(i)] the function $f(t)/\Phi(t)$ is monotone nondecreasing in $(0,\diam \Omega)$;

\item[(ii)] there exists $R>0$ such that $f(R)/\Phi(R)=1$, $f(r)/\Psi(r)>1$ for $r>R$, and $f(r)/\Phi(r)<1$ for $r<R$.
\end{itemize}
Then $\Omega$ is a geodesic ball centered at $O$. Moreover, in case (ii) we have that $\Omega=B_R$. 
\end{theorem}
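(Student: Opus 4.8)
The plan is to adapt Greco's two-sided comparison argument to the $p$-Laplacian on $(M,g)$, squeezing $\Omega$ between two concentric geodesic balls on which the $p$-torsion functions are explicitly controlled, and then, in the borderline case, to upgrade an equality of normal derivatives to an identity via the Hopf lemma. First I would fix the geometric frame. Set $R_1:=\min_{\partial\Omega}d$ and $R_2:=\max_{\partial\Omega}d$; both are attained, by compactness of $\overline\Omega$, at points $x_1,x_2\in\partial\Omega$, and both are positive since $O\in\Omega$. Because minimizing geodesics issued from $O$ have $d$ equal to arclength, and $\Omega$ is connected with $O\in\Omega$, any point with $d<R_1$ must lie in $\Omega$, so $B_{R_1}\subseteq\Omega$. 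Since $|\nabla d|\equiv 1$ on $M\setminus\{O\}$ forbids interior critical points of $d$, the maximum of $d$ over $\overline\Omega$ is attained on $\partial\Omega$, hence $\Omega\subseteq B_{R_2}$. Thus $B_{R_1}\subseteq\Omega\subseteq B_{R_2}$, with $x_i\in\partial\Omega\cap\partial B_{R_i}$, and at each $x_i$ the hypersurfaces $\partial\Omega$ and $\partial B_{R_i}$ are tangent with a common interior unit normal $\nu$; the isoparametric hypothesis ensures $\partial B_r$ is a $C^2$ geodesic sphere for all $r$ in the relevant range.

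Next I would introduce, for $i=1,2$, the solution $v_i$ of $\Delta_p v_i=-1$ in $B_{R_i}$ with $v_i=0$ on $\partial B_{R_i}$. By Lemma \ref{lemma_ball} each $v_i$ is radial, $v_i>0$ in $B_{R_i}$, and $\partial_\nu v_i\equiv\Phi(R_i)$ on $\partial B_{R_i}$; and since $\Delta_p u=-1<0$, the strong minimum principle gives $u>0$ in $\Omega$. Comparing on $B_{R_1}$: there $\Delta_p u=\Delta_p v_1$ and $v_1=0\le u$ on $\partial B_{R_1}$, so the weak comparison principle for $\Delta_p$ gives $u\ge v_1$ in $B_{R_1}$; as $u(x_1)=v_1(x_1)=0$, comparing interior normal derivatives at $x_1$ yields $f(R_1)=\partial_\nu u(x_1)\ge\partial_\nu v_1(x_1)=\Phi(R_1)$. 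Comparing on $\Omega$: there $\Delta_p u=\Delta_p v_2$ and $u=0\le v_2$ on $\partial\Omega$ (since $\partial\Omega\subseteq\overline{B_{R_2}}$ and $v_2\ge0$ there), so $u\le v_2$ in $\Omega$, and evaluating at $x_2$ gives $f(R_2)=\partial_\nu u(x_2)\le\partial_\nu v_2(x_2)=\Phi(R_2)$. Summarizing, $f(R_1)/\Phi(R_1)\ge 1\ge f(R_2)/\Phi(R_2)$ with $R_1\le R_2$.

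It remains to close the two cases. In case (ii), the sign conditions on $f/\Phi$ force $R_1\ge R$ (from $f(R_1)/\Phi(R_1)\ge 1$) and $R_2\le R$ (from $f(R_2)/\Phi(R_2)\le 1$); with $R_1\le R_2$ this gives $R_1=R_2=R$, and then $B_R\subseteq\Omega\subseteq B_R$ forces $\Omega=B_R$. In case (i), monotonicity of $f/\Phi$ and $R_1\le R_2$ give $1\le f(R_1)/\Phi(R_1)\le f(R_2)/\Phi(R_2)\le 1$, hence $f(R_1)=\Phi(R_1)$, i.e. $\partial_\nu(u-v_1)(x_1)=0$. Now $u-v_1\ge 0$ in $B_{R_1}$ vanishes together with its normal derivative at $x_1$, and $|\nabla u(x_1)|=|\nabla v_1(x_1)|=\Phi(R_1)>0$, so near $x_1$ the difference solves a uniformly elliptic linear equation; the strong comparison principle (Hopf boundary lemma) then forces $u\equiv v_1$ in $B_{R_1}$. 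Since $v_1=0$ on $\partial B_{R_1}$ while $u>0$ in $\Omega$, we must have $\partial B_{R_1}\cap\Omega=\emptyset$, so $\Omega\cap\overline{B_{R_1}}=B_{R_1}$ is clopen in the connected set $\Omega$, giving $\Omega=B_{R_1}$, a geodesic ball centered at $O$.

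The main obstacle I anticipate is the equality case of (i): passing from the agreement of the normal derivatives of $u$ and $v_1$ at the single contact point $x_1$ to the identity $u\equiv v_1$ requires a strong comparison / Hopf-type lemma for the quasilinear, possibly degenerate operator $\Delta_p$. This is available here only because $\partial_\nu u(x_1)=f(R_1)>0$ keeps $|\nabla u|$ and $|\nabla v_1|$ bounded away from zero near $x_1$, rendering the linearized operator uniformly elliptic; care is also needed with the (routine but hypothesis-dependent) geometric points — existence and tangency of the contact spheres $B_{R_i}$ and $C^2$ regularity of geodesic spheres — which rest on $d$ being isoparametric.
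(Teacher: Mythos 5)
Your overall strategy is exactly the paper's: squeeze $\Omega$ between the inscribed and circumscribed geodesic balls $B_{r_0}\subseteq\Omega\subseteq B_{r_1}$, use the radial solutions of Lemma \ref{lemma_ball} and the weak comparison principle to get $f(r_0)/\Phi(r_0)\ge 1\ge f(r_1)/\Phi(r_1)$, dispose of case (ii) immediately, and in case (i) upgrade the equality $\partial_\nu u(x_1)=\partial_\nu v_1(x_1)$ to an identity via Hopf's lemma in a region where the linearized operator is uniformly elliptic. Cases (ii) and the derivation of the two inequalities are correct and match the paper.

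The one step that does not follow as written is the conclusion of case (i): from the Hopf boundary lemma applied ``near $x_1$'' you assert $u\equiv v_1$ in all of $B_{R_1}$. The Hopf/strong maximum principle argument only yields $u\equiv v_1$ on the connected neighborhood $W=B_\rho(x_1)\cap B_{R_1}$ where both $|\nabla u|$ and $|\nabla v_1|$ are bounded away from zero; it cannot be propagated across the center $O$, where $\nabla v_1=0$ (indeed $V'(0)=0$ in Lemma \ref{lemma_ball}) and the linearization degenerates. For $p\neq 2$ a strong comparison principle for $\Delta_p$ valid through critical points is not available, so the global identity is unjustified, and your subsequent deduction ``$\partial B_{R_1}\cap\Omega=\emptyset$'' relies on it. The repair is the paper's: from $u\equiv v_1$ in $W$ alone one gets $u=0$ on $\partial B_{R_1}$ in a neighborhood of $x_1$, hence $\partial\Omega$ and $\partial B_{R_1}$ coincide near $x_1$; the set of such contact points is then open and closed in the connected sphere $\partial B_{R_1}$ (equivalently, in $\partial\Omega$), which forces $\partial\Omega=\partial B_{R_1}$ and then $\Omega=B_{R_1}$ by your clopen argument. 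With that local-plus-connectedness substitution the proof is complete; the rest of your write-up, including the care taken about $|\nabla u|\neq 0$ near $\partial\Omega$ and the boundary regularity needed to speak of $\partial_\nu u$, is in line with the paper.
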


Our proof relies on comparison principles. More specifically, our approach is based on the comparison of the solution in $\Omega$ with the solution in a ball about $O$. \\

As far as we know, few overdetermined problems have been studied in a Riemannian setting (see \cite{CV,Enciso_Peralta,FMV,FSV,FV}), where classical tools for proving symmetry like the method of moving planes can not be employed (at least in a standard way). Our approach is close to the one in \cite{Enciso_Peralta} where, by using comparison principles, the authors prove that if there exists a lower bounded nonconstant function $u$ which is p-harmonic ($1<p<n$) in a punctured domain and such that $u$ and $u_\nu$ are constant on $\partial \Omega$, then $u$ is radial and $\partial \Omega$ is a geodesic sphere.

\medskip
\noindent
 {\em{Acknowledgements}}. 
The second author is grateful to the organizers of \lq\lq {\em Geometric Properties for Parabolic and Elliptic PDE's 4th Italian-Japanese Workshop}" for the invitation and the very kind hospitality during the workshop.

\section{The Euclidean case} \label{section_toy}
In this preliminary section we consider the basic case when the Riemannian manifold is the Euclidean space, $O$ is the origin of $\RR^n$ and the $p$-Laplacian is the usual Laplacian operator (i.e. $p=2$). 

Let $\Omega$ be a bounded domain in the Euclidean space containing the origin $O$ and consider the overdetermined problem 
\begin{equation} \label{overd_pb_Euclidean}
\begin{cases}
\Delta u = -1 & \textmd{in } \Omega \,,\\
u= 0 & \textmd{on } \partial \Omega \,,\\
\partial_\nu u(x)=f(|x|) & \textmd{on } \partial \Omega\,,
\end{cases}
\end{equation}
where $\nu$ denotes the normal inward to $\partial \Omega$ and $f:(0,+\infty) \to (0,+\infty)$ is a continuous function satisfying certain conditions which we specify later. Since both Dirichlet and Neumann boundary conditions are imposed on $\partial \Omega$, problem \eqref{overd_pb_Euclidean} is not well-posed unless $f$ and $\Omega$ satisfy some compatibility conditions. Our goal is to consider conditions on $f$ which imply that $\Omega$ is a ball centered at the origin. 

The scheme that we have in mind is the following. Let $B_{r_0}$ be the largest ball contained in $\Omega$ centered at the origin and let $B_{r_1}$ be the smallest ball containing $\Omega$ centered at the origin. If we denote by $v^r$ the solution of 
$$
\begin{cases}
\Delta v = -1 & \textmd{in } B_r \,,\\
v= 0 & \textmd{on } \partial B_r \,,
\end{cases}
$$
then we have that $v^r$ is radially symmetric and $\partial_\nu v^r$ is constant on $\partial B_r$ and is given by
$$
\partial_\nu v^r= \frac{|B_r|}{|\partial B_r|} = \frac{r}{n} \quad \textmd{ on } \partial B_r \,.
$$
By comparison principle, we have that $v^{r_0} \leq u$ in $B_{r_0}$ and $v^{r_1} \geq u$ in $\Omega$ and, by looking at the normal derivatives at the tangency points between $\Omega$ and $B_{r_0}$ and $B_{r_1}$, we readily obtain that 
\begin{equation} \label{cond_on_f_Eucl}
\frac{r_0}{n} \leq f(r_0) \quad \textmd{ and } \quad f(r_1) \leq \frac{r_1}{n}  \,.
\end{equation}
Now, let 
$$
F(t)=\frac{n f(t)}{t} \,.
$$
We have the following results. The first one is essentially contained in \cite{Gr}.
\begin{itemize}
\item[\emph{Case 1.}] \emph{If $F(t)$ is monotone nondecreasing then $\Omega$ is a ball centered at the origin.}

\smallskip
\noindent Indeed, if $F(r_0)<F(r_1)$ then \eqref{cond_on_f_Eucl} immediately implies that $r_0=r_1$ and then $\Omega$ is a ball. If $F(r_0)=F(r_1)$, from  \eqref{cond_on_f_Eucl} we have that
$$
\partial_\nu u(x) = \frac{|x|}{n} 
$$
for every $x \in \partial \Omega$. Hence, the function $w=u-v^{r_0}$ satisfies 
$$
\begin{cases}
\Delta w = 0 & \textmd{in } B_{r_0} \,,\\
w \geq 0 & \textmd{in } \partial B_{r_0} \,, \\
\partial_\nu w(p) = 0 \,, &  
\end{cases}
$$
where $p\in \partial  \Omega$ is a tangency point between $B_{r_0}$ and $\Omega$. From Hopf's boundary point Lemma we obtain that $w \equiv 0$ in $\overline{B}_{r_0}$ and hence that $\Omega$ is a ball.

\vspace{0.5em}

\item[\emph{Case 2.}]   \emph{If there exists $R$ such that} 
\begin{equation} \label{overd_onodera} 
F(R)=1\,, \quad F(r)>1 \ \textmd{ if }\ r>R\,, \quad  F(R)<1 \ \textmd{ if } r < R \,,
\end{equation}
\emph{then $\Omega$ is the ball of radius $R$ centered at the origin.}\

\smallskip
\noindent This case is simpler then the previous one. Indeed, from \eqref{cond_on_f_Eucl} we have that $F(r_0) \geq 1$ and $F(r_1) \leq 1$, which imply that $r_0 \geq R$ and $r_1 \leq R$. Since $r_0 \leq r_1$, we conclude.
\end{itemize}

\section{Riemannian setting}
In this section we study the overdetermined problem \eqref{overd_pb_Euclidean} in a Riemannian manifold. 

Let $(M,g)$ be an dimensional Riemannian manifold with a fixed point $O$. We recall that given a $C^2$ map $u\colon M\to \RR$ its Laplacian is defined by 
$$
\Delta u={\rm div}(Du)\,,
$$
where $Du$ is the gradient of $u$ (which in the Riemannian setting is defined as the $g$-dual of the differential of $u$) and ${\rm div}$ is the divergence. 
$\Delta u$ writes in local coordinates $(x^1,\ldots,x^n)$ as 
$$
\Delta u = \frac{1}{\sqrt{|g|}} \partial_{x^j} \left( g^{jk} \sqrt{|g|} \partial_{x^k} u \right) \,.
$$
We further recall that the $p$-Laplacian operator  on a Riemannian manifold is  defined by 
$$
\Delta_pu = {\rm div}(|Du|^{p-2}Du), \quad p>1\,. 
$$
In this context a function $u\colon M\to \RR$ is called {\em radial} if can be written as $u=f\circ d$ for some real function $f$. Radial functions are usually studied in polar coordinates. Here we recall that if $0<\delta$ is less than the injectivity radius of $M$ at $O$, then $\exp_O$ induces polar coordinates $(r,q)\in (0,\delta)\times S^{n-1}$ on $B_\delta$ induced by the usual polar coordinates on the tangent space to $M$ at $O$. If $u=f\circ d$ is a radial function, then its Laplacian in polar coordinates takes the following expression 
$$
\Delta u=\partial^2_{rr} u+\left(\frac{n-1}{r}+\frac{\partial_r\det (d\exp_O)}{\det(d\exp_O)}\right)\partial_r u
$$ 
(see e.g. \cite{rosenberg}). In particular, if $u=d$ we get 
$$
\Delta d=\frac{n-1}{r}+\frac{\partial_r\det (d\exp_O)}{\det(d\exp_O)}\,.
$$
We notice that assuming $d$ isoparametric in the geodesic ball $B_{\delta}$ is equivalent to assume that the quantity
$$
\frac{\partial_r\det (d\exp_O)}{\det(d\exp_O)}
$$ 
defines a function of the distance itself. Moreover, we remark that 
\begin{equation} \label{dist_in_O}
\lim_{d\to 0} \Delta d = + \infty \,.
\end{equation}

We have the following lemma.

\begin{lemma} \label{lemma_ball}
Let $(M,g)$ be a Riemannian manifold and assume that the distance $d$ from a point $O\in M$ is isoparametric. Let $v^r$ be the solution of\begin{equation} \label{pLaplace_Br}
\begin{cases}
\Delta_p v^r = -1 & \textmd{in } B_r \,, \\
v^r=0 & \textmd{on } \partial B_r \,.
\end{cases}
\end{equation}
Then $v^r$ is a function which depends only on the distance from the origin and is given by
\begin{equation} \label{v^r_riemann}
v^r(x)= \int_{|x|}^r e^{\frac{1}{p-1} \int_t^r \eta(s) ds } \left[ \frac{|B_r|}{|\partial B_r|} - \int_t^r e^{-\int_\tau^r \eta(s) ds} d\tau  \right]^{\frac{1}{p-1}}  dt \,.
\end{equation}
In particular, $\partial_\nu v^r$ is constant on $\partial B_r$ and is given by
\begin{equation} \label{partial_nu_v^r}
\partial_\nu v^r = \left(\frac{|B_r|}{|\partial B_r|}\right)^{\frac{1}{p-1}}  \,.
\end{equation}
\end{lemma}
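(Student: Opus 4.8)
The plan is to reduce the Dirichlet problem \eqref{pLaplace_Br} to an ordinary differential equation via the radial ansatz $v^r=f\circ d$, to integrate that equation explicitly, and then to invoke uniqueness for the $p$-Laplace Dirichlet problem to conclude that the radial function so constructed is exactly $v^r$. Working on the punctured ball $B_r\setminus\{O\}$, where $d$ is $C^2$, and using $|Dd|\equiv 1$, one has $D(f\circ d)=f'(d)\,Dd$ and $|D(f\circ d)|^{p-2}D(f\circ d)=\phi(d)\,Dd$ with $\phi(t):=|f'(t)|^{p-2}f'(t)$, so that
\begin{equation*}
\Delta_p(f\circ d)=\diver\bigl(\phi(d)\,Dd\bigr)=\phi'(d)\,|Dd|^2+\phi(d)\,\Delta d=\phi'(d)+\eta(d)\,\phi(d),
\end{equation*}
where the isoparametric hypothesis $\Delta d=\eta(d)$ was used. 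Hence $v^r=f\circ d$ solves $\Delta_p v^r=-1$ in $B_r\setminus\{O\}$ precisely when $\phi$ solves the linear first order equation $\phi'(t)+\eta(t)\,\phi(t)=-1$ on $(0,r)$, while the Dirichlet condition on $\partial B_r$ becomes $f(r)=0$.

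Next I would integrate this ODE with the integrating factor $e^{-\int_t^r\eta(s)\,ds}$. The general solution carries one free constant, which is pinned down by requiring that $v^r$ solve $\Delta_p v^r=-1$ across the center $O$, not merely on the punctured ball --- equivalently, that the flux $\int_{\partial B_\varepsilon(O)}|Dv^r|^{p-2}\partial_r v^r$ vanish as $\varepsilon\to 0$, which discards the solutions blowing up like $t^{-(n-1)}$ at $O$ (those behave there like a fundamental solution of $\Delta_p$ and carry a nonzero flux). This selects the solution with $\phi(0)=0$, namely
\begin{equation*}
-\phi(t)=e^{\int_t^r\eta(s)\,ds}\Bigl[\int_0^r e^{-\int_\tau^r\eta(s)\,ds}\,d\tau-\int_t^r e^{-\int_\tau^r\eta(s)\,ds}\,d\tau\Bigr].
\end{equation*}
Under the isoparametric assumption the polar volume density $t^{n-1}\det(d\exp_O)$ is a function of $t$ alone (compare the angular dependence as $t\to 0$, where it behaves like $t^{n-1}$), and since $\eta(t)=\tfrac{d}{dt}\log\bigl(t^{n-1}\det(d\exp_O)\bigr)$ one gets $e^{-\int_\tau^r\eta(s)\,ds}=\tau^{n-1}\det(d\exp_O)(\tau)\big/\bigl(r^{n-1}\det(d\exp_O)(r)\bigr)$; integrating in geodesic polar coordinates then yields $\int_0^r e^{-\int_\tau^r\eta(s)\,ds}\,d\tau=|B_r|/|\partial B_r|$. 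In particular the bracket above is nonnegative, so from $f'=-(-\phi)^{1/(p-1)}$ (note $\phi<0$ on $(0,r)$) together with $f(r)=0$, integrating from $|x|$ to $r$ reproduces exactly formula \eqref{v^r_riemann}.

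Finally I would check that this $v^r$ is a genuine weak solution of \eqref{pLaplace_Br} on all of $B_r$: since $|Dv^r|=|f'(d)|=(-\phi(d))^{1/(p-1)}\to 0$ as $d\to 0$, $v^r$ is $C^1$ up to $O$, so integrating by parts on $B_r\setminus B_\varepsilon(O)$ in the weak formulation and letting $\varepsilon\to 0$ kills the boundary contribution at $O$. By the standard existence and uniqueness theory for the $p$-Laplace Dirichlet problem, \eqref{pLaplace_Br} has a unique solution, which therefore coincides with $v^r$; in particular $v^r$ depends only on $d$. Evaluating \eqref{v^r_riemann} at $|x|=r$ gives $\partial_\nu v^r=-f'(r)=(-\phi(r))^{1/(p-1)}=\bigl(|B_r|/|\partial B_r|\bigr)^{1/(p-1)}$, which is \eqref{partial_nu_v^r} and is visibly constant on $\partial B_r$; alternatively, once $v^r$ is known to be radial, the same value follows from the divergence identity $|B_r|=-\int_{B_r}\Delta_p v^r=(\partial_\nu v^r)^{p-1}|\partial B_r|$. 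I expect the only genuinely delicate point to be the analysis at the center $O$ --- singling out the correct solution of the ODE through the flux condition and checking that $O$ is a removable singularity; the remaining steps are routine integrations.
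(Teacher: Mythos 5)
Your proposal is correct and follows essentially the same route as the paper: the radial ansatz reduces \eqref{pLaplace_Br} to the first-order linear ODE for $|V'|^{p-1}$, which is integrated with the same integrating factor, the constant being fixed by the value $|B_r|/|\partial B_r|$ (the paper gets this from the divergence theorem, you equivalently from the vanishing-flux condition at $O$ plus the polar-coordinate identity), and one concludes by regularity at the center and uniqueness for the Dirichlet problem. Your discussion of the removable singularity at $O$ is a slightly more explicit version of the paper's appeal to $\lim_{d\to 0}\Delta d=+\infty$ giving $V'(0)=0$, but it is not a different argument.
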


\begin{proof}
We look for a solution of the form $v^r(x)=V(d(x))$. Since $d$ is isoparametric, with $\Delta d = \eta(d)$, and the gradient of the distance function has unit norm in $M \setminus \{O\}$, then we have that $V$ satisfies 
$$
|V'(t)|^{p-2} \left[ (p-1) V''(t) + \eta(t) V'(t) \right] = -1,
$$
and hence
$$
\frac{d}{dt} |V'(t)|^{p-1} + \eta(t) |V'(t)|^{p-1} = 1.
$$
Therefore
$$
|V'(t)|^{p-1} = e^{\int_t^r \eta(s) ds } \left[ |V'(r)|^{p-1} - \int_t^r e^{-\int_\tau^r \eta(s) ds} d\tau  \right] \,.
$$
Since we are looking for a solution in $B_r$ which depends only on $d$, from \eqref{pLaplace_Br} and the divergence theorem we have that $|V'(r)|^{p-1}=|B_r|/|\partial B_r|$ and hence   
$$
V'(t) = - e^{\frac{1}{p-1} \int_t^r \eta(s) ds } \left[ \frac{|B_r|}{|\partial B_r|} - \int_t^r e^{-\int_\tau^r \eta(s) ds} d\tau  \right]^{\frac{1}{p-1}} \,,
$$
and the expression for $v^r$ follows. From \eqref{dist_in_O} we have that $V'(0)=0$ and then $v^r \in C^{1,\alpha}$ in $B_r$ and satisfies \eqref{pLaplace_Br}. 
\end{proof}

\noindent We are ready to prove Theorem \ref{thm_riemann}.

\medskip

\emph{Proof of Theorem $\ref{thm_riemann}.$}  
We firstly give some remarks on the regularity of the solution. From elliptic regularity theory we have that $u \in C^{1,\alpha}(\Omega)$ (see \cite{DiBen,Lew,Tol}) and $u \in C^{2,\alpha}$ in a neighborhood of any point where $|\nabla u| \neq 0$ (see \cite{GT}). About the regularity at the boundary, we notice that we have by assumption that $|\nabla u| \neq 0$ on $\partial \Omega$ and hence  $|\nabla u| \neq 0$ in a tubular neighborhood of $\partial \Omega$. From \cite{GL,Vo} we obtain that $\partial \Omega$ is of class $C^2$ and from \cite{Li} we have that $u \in C^{1,\alpha}( \overline{\Omega})$.
 
Now, we observe that $u>0$ in $\Omega$. Indeed, the boundary condition in \eqref{pb_general} implies that $u>0$ in a neighborhood of $\partial \Omega$. If $u=0$ at some interior point of $\Omega$, then the strong maximum principle (see \cite{PS}) implies that $u \equiv 0$ in $\Omega$, which gives a contradiction. Hence, $u>0$ in $\Omega$.

We define $r_0$ and $r_1$ as follows
$$
r_0=\sup\{r>0 :\ B_r \subset \Omega \} \quad \textmd{ and } \quad r_1=\inf \{ r>0:\ \Omega \subset B_r \} \,,
$$
and we denote by $x_i$ a tangency points between $\partial B_{r_i}$ and $\partial \Omega$, for $i=0,1$.
  
As in the Euclidean case in section \ref{section_toy}, the proof is based on the comparison between $u$ and the solutions of the $p$-torsion problem in $B_{r_0}$ and $B_{r_1}$. Since $B_{r_0} \subseteq \Omega \subseteq B_{r_1}$, by the weak comparison principle (see \cite{HKM,PS}) we have that $v^{r_0} \leq u$ in $B_{r_0}$ and $u \leq v^{r_1}$ in $\Omega$, where $v^{r_0}$ and $v^{r_1}$ are given by \eqref{v^r_riemann}. 

Since $x_i$ is a tangency point between $\partial B_{r_i}$ and $\partial \Omega$, the inward normal vectors to $\partial B_{r_i}$ and to $\partial \Omega$ at $x_i$ agree and $d(x_i)=r_i$ for $i=0,1$. Moreover, $v^{r_i}(x_i) = u(x_i) = 0 $, and by comparison we have that 
$$
\Phi(r_0) = \partial_{\nu} v^{r_0}(x_0) \leq \partial_{\nu} u(x_0) \quad \textmd{ and } \quad \partial_{\nu} u(x_1) \leq \partial_{\nu} v^{r_1}(x_1) = \Phi(r_1)\,,
$$
and hence
\begin{equation} \label{cond_riem}
1 \leq \frac{f(r_0)}{\Phi(r_0)} \quad \textmd{ and } \quad  \frac{f(r_1)}{\Phi(r_1)} \leq 1 \,.
\end{equation}
If we assume that case (ii) in the assertion of the theorem occurs, then \eqref{cond_riem} implies that $r_1 \leq R \leq r_0$, and hence $r_0=r_1=R$.

In case (i), we have that \eqref{cond_riem} implies  
$$
\frac{f(r)}{\Phi(r)} = 1 \quad \textmd{for every } r_0 \leq r \leq r_1 \,,
$$
and hence  
$$
\partial_\nu u(x) = \Phi(|x|)  \quad \textmd{for every } x \in \partial \Omega \,.
$$
In particular, we have that 
\begin{equation} \label{ulivi}
\partial_\nu u(x_0) = \partial_\nu v^{r_0}(x_0) \,.
\end{equation} 
Since $\partial_\nu u(x_0)>0$, there exists $\rho>0$ such that $|\nabla u|  \neq 0$ in $B_{\rho}(x_0) \cap \Omega$. By choosing $\rho < r_0$ we also have that $|\nabla v^{r_0}|  \neq 0$ in $W:=B_{\rho}(x_0) \cap B_{r_0}$. By standard elliptic regularity theory, we have that $u$ and $v^{r_0}$ are classical solutions of $\Delta_p u = -1$ in $W$ and the difference $u-v^{r_0}$ is nonnegative and satisfies a linear uniformly elliptic equation in $W$:
$$
\begin{cases}
L(u-v^{r_0})=0 \quad \textmd{and} \quad u-v^{r_0} \geq 0  & \textmd{in } W \,, \\
\partial_{\nu} (u-v^{r_0} )(x_0) = 0 \,.
\end{cases}
$$
By Hopf's Lemma (see \cite{Hopf}) we have that $u=v^{r_0}$ in $W$. In particular, we obtain that $u=0$ in $\partial B_\rho (x_0) \cap B_{r_0}$, which implies that $\partial B_{r_0}$ $\partial \Omega$ coincide in a open neighborhood of $x_0$. More precisely, we have proved that the set of tangency points between $\partial \Omega$ and $\partial B_{r_0}$ is both open and closed, and hence we have that $\partial \Omega = \partial B_{r_0}$, i.e. $\Omega$ is a ball.

%We have the following Hopf-Calabi Maximum Principle \cite{Ca}.
%\begin{lemma}  \label{lemma_comp_warped}
%Let $u$ and $v$ be such that $\Delta v \geq \Delta u$ in $\Omega$ and $v \leq u$ on $\partial \Omega$. Then $v \leq u$ in $\overline{\Omega}$. 
%\end{lemma}
%
%We have the following Hopf boundary point Lemma.
%
%\begin{lemma} \label{lemma_Hopf}
%Let $\Delta w = 0$ and $w \geq 0$ in $\Omega$. Assume that there exists $p\in \partial \Omega$ such that $w(p)=0$.
%Then, either $w\equiv 0$  in $\Omega$ or $w>0$ in $\Omega$ and $\pa_{\nu} w(p)>0$.
%\end{lemma}

\section{Examples}
%It is quite natural wondering how the assumption on the map $d$ in the statement of theorem \ref{thm_riemann} to be isoparametric is restrictive. First of all we notice that the maximum principle implies that the assumption is never satisfied if $M$ is compact.  

Theorem \ref{thm_riemann} can be applied to open balls in $\mathbb{R}^n$ equipped with a rotationally symmetric metric. More precisely, let $\bar r\in \mathbb{R}\cup \infty$ be fixed  and consider the open ball $B_{\bar r}$ centered at the origin $O$ of $\RR^n$ of radius $\bar r$ equipped with a Riemannian metric $g$ which in polar coordinates reads as 
$$
g=dt^2+\rho^2g_{S}\,,
$$
where $\rho\colon [0,\bar r)\to \mathbb R $ is as smooth function such that 
$$
\rho(0)=0\,,\quad \rho(t)>0 \,,
$$
for every $t\in [0,\bar r)$ and $g_S$ is the standard metric on the unitary $(n-1)$--dimensional sphere $S^{n-1}$. In this setting the geodesic distance $d$ of a generic point $p\in B_{\bar r}$ from $O$ is given by the Euclidean norm of $p$, since $t\mapsto t p$ is a minimal geodesic connecting the origin to the point $p$ for $t\in [0,\bar r)$. Moreover, if $u\colon B_{\bar r}\to \mathbb R $ is a smooth radial function, then its Laplacian with respect to $g$ takes the following expression
$$
\Delta u = \partial_{tt}^2 u + (n-1) \frac{\rho'}{\rho} \partial_t u\,,
$$
and consequently 
$$
\Delta d (x) = (n-1) \frac{\rho'(d (x))}{\rho(d (x))}\, =: \eta (d (x)) \,,
$$
which shows that $d$ is isoparametric.  Notice that in this setting the statement of theorem \ref{thm_riemann} implies that $\Omega$ is an Euclidean ball, since geodesic balls centered at $O$ are exactly the Euclidean balls. 

Rotationally symmetric spaces include space form models as particular cases: the Euclidean space, the Hyperbolic space and the unitary sphere, where the function $\rho$ takes the following expression: 
\begin{itemize}
\item $\rho(t)=t$ in the Euclidean case;
\item $\rho(t)=\sinh t$ in the Hyperbolic case;
\item $\rho(t)=\sin t$ in the spheric case. 
\end{itemize}
Note that the map $v^r$ in lemma \ref{lemma_ball} in the Euclidean case takes the following expression   
$$
 v^r(x)=\left(\frac{p-1}{p}\right) \frac{r^{\frac{p}{p-1}}-|x|^{\frac{p}{p-1}}}{n^{\frac{1}{p-1}}} \,.
$$

%\begin{itemize}
%\item Cosa abbiamo dimostrato nel caso di space forms, mettendo possibilmente il valore esplicito di $f$.
%\item Warped product, cosa sono e storielline
%\item Open problems: Serrin e ?
%\end{itemize}
%LUIGIONE QUANDO TI PIGLIA BENE SCRIVI UN PO' DI MINCHIATE SU WARPED]
%
%$$
%g=dr \otimes dr + \rho^2 g_N \,.
%$$
%For instance, if $\rho(r)=r$ then ..... euclideo, iperbolico, sfera.... deSitter-Schwarzschild space-time....Schwarzschild manifold. Vedi Brendle.
%
%
%If we have a function $u$ depending only on the $r$, then we have that
%$$
%\Delta u = \frac{\partial^2 u}{\partial r^2} + \frac{N-1}{\rho(r)} \frac{\partial u}{\partial r} \,.
%$$
%If $u=r^2/2$ then
%$$
%\nabla u = (r,0,\ldots,0) \quad \textmd{and} \quad \Delta u = 1 + (N-1) \frac{r \rho'(r)}{\rho(r)}  \,.
%$$
%Let $u=f(r)$ be the solution of 
%$$
%\Delta u = -1 \,,
%$$
%and let $\Psi(r)= \int \rho^{-1}$
%then
%$$
%f'(r)= e^{-(n-1) \Psi(r)} \left(c - \int_0^r e^{(n-1) \Psi(t)} dt \right) \,,
%$$
%where $c$ is a constant.

\end{document}